\newtheorem{theorem}{Theorem}
\newtheorem{lemma}{Lemma}
\newtheorem{remark}{Remark}
\newtheorem{proposition}{Proposition}
\newcommand{\Expect}{\mathsf{E}}
\begin{document}

\begin{center}
	\textbf{{\Large Structure of the particle population for a branching random walk with a critical reproduction law}}
\end{center}

\begin{center}
	\textit{{\large Daria Balashova, Stanislav Molchanov,  Elena Yarovaya}}
\end{center}

\title{Structure of the particle population for a branching random walk with a critical reproduction law}

\begin{abstract}
We consider a continuous-time symmetric branching random walk on the $d$-dimensional lattice, $d\ge 1$, and assume that at the initial moment there is one particle at every lattice point. Moreover, we assume that the underlying random walk has a finite variance of jumps and the reproduction law is described by a critical Bienamye-Galton-Watson process at every lattice point. We study the structure of the particle subpopulation generated by the initial particle situated at a lattice point $x$. We answer why vanishing of the majority of subpopulations does not affect the convergence to the steady state and leads to clusterization for lattice dimensions $d=1$ and $d=2$.

{\it Keywords:} Branching random walk; critical branching process; limit theorems; population dynamics.
\medskip

\textbf{MSC 2010:} 60J80, 60J35, 60F10
\end{abstract}

\section{Introduction}
\label{intro}

Consider a lattice population model, i.e. a random field $n(t,\cdot)$ of  particles on $\mathbb{Z}^{d}$, $d\ge 1$, where $n(t,y)$ is the number of particles at the point $y \in \mathbb{Z}^{d}$ at the time moment $t\ge 0$. Let $n(0,y)\equiv 1$ for every $y \in \mathbb{Z}^{d}$. The spatio-temporal evolution of the field includes the migration and birth-death processes. As usual, we exclude interaction between particles.

The migration until the first transformation (death or particle splitting) is described by a random walk of particles governed by the generator
\begin{equation} \label{l_eq1}
\mathcal{L}\psi(x)=\kappa\sum\limits_{z\ne 0}[\psi(x+z)-\psi(x)]a(z).
\end{equation}
Here $\kappa >0$ is the diffusion coefficient and $a(z)\ge 0$, $\sum\limits_{z\ne 0}a(z)=-a(0)=1$ is the distribution of the random walk jumps. We assume symmetry $a(z)=a(-z)$ and irreducibility of the random walk, see~\cite{Molchanov Yarovaya 2013}.
Finally, we assume that, for every $\lambda\in \mathbb{R}^{d}$,
\[
\sum_{z\in \mathbb{Z}^{d}}e^{(\lambda, z)}a(z)<\infty.
\]
It means that the tails of $a(z)$ are superexponentially light. Note that the large deviation theorems from~\cite{Molchanov Yarovaya 2013} based on the last condition.

The transition probabilities of the random walk $X(t), t\ge 0$, with the generator $\mathcal{L}$, i.e. $p(t,x,y)=P_x\{X(t)=y\}$, satisfy, see \cite{Gikhman},
the Kolmogorov backward equations
\begin{equation}
\label{eq2}
\frac{\partial p}{\partial t}=\mathcal{L}p,\quad p(0,x,y)=\delta(y-x).
\end{equation}

Consider the Fourier transform
\[
\hat a(k)=\sum\limits_{z\ne 0}e^{i(k,x)}a(z)=\sum\limits_{z\ne 0}\cos(kz)a(z).
\]
Due to the homogeneity of the random walk over the space, we have $p(t,x,y)=p(t,0,y-x)=p(t,0,x-y)$. By applying the Fourier transform to equation \eqref{eq2} we get
\[
p(t,x,y)=\frac{1}{(2\pi)^d}\int\limits_{T^d}e^{-\kappa t(1-\hat a(k)+ik(x-y))} dk,\quad T^d=[-\pi,\pi]^d.
\]
Note that $\hat a(k)$ is a real function and as a result we have the following important inequality:
\begin{equation}\label{l_eq4}
p(t,0,z)\le p(t,0,0)=\frac{1}{(2\pi)^d}\int\limits_{T^d}e^{-\kappa t(1-\hat a(k))}dk.
\end{equation}
In what follows we use the notation: $p(t,z):=p(t,0,z)$ and  $p(t,y-x):=p(t,x,y)$.

We assume that the intensity of jumps $\kappa$ in \eqref{l_eq1} is equal to one, then the local Central Limit Theorem (CLT) from \cite{Molchanov Yarovaya 2013} implies for $|z|<C\sqrt{t}$ that
\begin{equation} \label{l_eq5}
p(t,z)\sim \frac{e^{-\frac{|z|^2}{2t}}}{(2\pi t)^{d/2}}, \quad t\to\infty.
\end{equation}

Each particle of the population dies in the interval ($t,t+dt$) with probability $\mu dt$, where $\mu$ is the mortality rate, or splits into two identical particles with probability $\beta dt$.
We call $\beta$ the birth rate. Further we consider only the critical branching process: $\beta=\mu$ at every lattice point.

The total population $n(t,y)$ at a point $y\in \mathbb{Z}^{d}$ is the sum of independent subpopulations:
\[
n(t,y)  =\sum\limits_{x\in \mathbb{Z}^{d}}n(t,x,y),
\]
where $n(0,x,y)  =\delta(y-x)$, and  $n(0,y) \equiv 1$.

For the moment analysis of the field $n(t,y)$ one can use either the forward Kolmogorov equations for the correlation functions
\[
K_t
(x_1,...,x_m)=\Expect n(t,x_1)...n(t,x_m),
\]
or the backward approach for $n(t,x,y)$ with the following summation over $x$ of the cumulants of subpopulations. Note that the first approach is simpler for continuous models with $\mathbb{R}^d$ instead of $\mathbb{Z}^{d}$, see~\cite{Kondratiev}.
Their method requires one particle in each site and is not applicable to the lattice model. In the latter case the backward approach was developed in~\cite{Molchanov Whitmeyer}.

The papers~\cite{Kondratiev},~\cite{Molchanov Whitmeyer} contain the following results: a branching random walk for $\mu=\beta$ on $\mathbb{Z}^{d}$ (or $\mathbb{R}^d$) converges to a steady state (statistical equilibrium)  if the underlying process $X(t)$ with the generator $\mathcal{L}$ is transient.

\begin{remark}
Theorems 3.1 and 3.2 in~\cite{Kondratiev} contain some additional assumptions about the function $a(x)$, $x \in \mathbb{R}^d$. In the lattice case we have no additional assumptions on $X(t)$, except the transience.
\end{remark}

For the generating function of subpopulation $u_z(t, x, y)=\Expect z^{n(t,x,y)}$
we have the parabolic problem with quadratic non-linearity (see (16) in~\cite{Molchanov Whitmeyer}):
\begin{equation} \label{l_eqr}
\begin{aligned}
\frac{\partial u_z(t, x, y)}{\partial t}  & =\mathcal{L}u_z(t, x, y)+\beta u_z^2(t, x, y)-(\beta+\mu)u_z(t, x, y)+\mu,\\
u_z(0, x, y)  & =\begin{cases}
z, \ x=y,\\
1, \ x\ne y.
\end{cases}
\end{aligned}
\end{equation}
From this last equation in the critical case $\mu=\beta$ we get
\begin{equation} \label{l_eq7'}
\begin{aligned}
\frac{\partial u_z(t, x, y)}{\partial t} & =\mathcal{L}u_z(t, x, y)+\beta (u_z(t, x, y)-1)^2,\\
u_z(0,x;y) & =\begin{cases}
z, \ x=y,\\
1, \ x\ne y.
\end{cases}
\end{aligned}
\end{equation}
We define the factorial moments as
\[
m_k(t,x,y) := \Expect\prod_{i=0}^{k-1}(n(t,x,y)-i)=\frac{\partial^ku_z(t, x, y)}{\partial z^k}\bigg|_{z=1}.
\]
Then, differentiating the equation \eqref{l_eq7'} with respect to $z$ and substituting $z=1$, we obtain
\begin{align*}
\frac{\partial m_1(t,x,y)}{\partial t} & =\mathcal{L}m_1(t,x,y), \\
 m_1(0,x,y) & = \delta(y-x).
\end{align*}
Note that the Cauchy problems for $p(t,x,y)$ \eqref{eq2} and $m_1(t,x,y)$ are the same, so we get
\[
m_1(t,x,y) =p(t,x,y)
\]
and as a result
\[
\Expect n(t,y)=\sum\limits_{x\in \mathbb{Z}^{d}}\Expect n(t,x,y)=\sum\limits_{x\in \mathbb{Z}^{d}}m_1(t,x,y)=\sum\limits_{x\in \mathbb{Z}^{d}}p(t,x,y)\equiv 1
\]
(conservation law for the density of the total population).

The analysis of the second and higher moments ($k\ge3$) for the total population and subpopulations is more complex, see details in~\cite{Molchanov Whitmeyer}.

For the second moment $m_2(t,x,y)=\Expect [n(t,x,y)(n(t,x,y)-1)]$ we have the following results:
\begin{align*}
\frac{\partial m_2(t,x,y)}{\partial t} & =\mathcal{L}m_2(t,x,y)+2\beta m_1^2(t,x,y), \\
m_2(0,x,y) & = 0.
\end{align*}
We consider the Fourier representations
\begin{align*}
\mathcal{F}(\mathcal{ L}(m_1(t,x,y))) &=\mathcal{\hat L}(\hat m_1(t,k,y))=\kappa\hat m_1(t,k,y)(\hat a(k)-1),\\
\mathcal{\hat L}(k)&=\kappa(\hat a(k)-1).
\end{align*}
Thus, one can obtain
\begin{align*}
\frac{\partial \hat m_1(t,x,y)}{\partial t} & =\mathcal{\hat L}(k)\hat m_1(t,k,y), \\
\hat m_1(0,k,y)  & =\sum_x \delta(y-x)e^{i(k,x)}=e^{i(k,y)},
\end{align*}
its solution takes the form
\[
\hat m_1(t,k,y)  =e^{t\mathcal{\hat L}(k)}e^{i(k,y)},\\
\]
and for
\begin{align*}
\frac{\partial \hat m_2(t,k,y)}{\partial t} & =\mathcal{\hat L}(k)\hat m_2(t,k,y)+2\beta \hat m_1(t,k,y)\ast \hat m_1(t,k,y), \\
\hat m_2(0,k,y) & = 0,
\end{align*}
we obtain
\[
\hat m_2(t,k,y) e^{-t\mathcal{\hat L}(k)}   = 2\beta \int_0^t \hat m_1(t,k,y)\ast \hat m_1(t,k,y)e^{-s\mathcal{\hat L}(k)}\, ds,
\]
where ``$\ast$'' denotes the convolution operator.

Denote
\[
m_k(t,x,\Gamma) := \sum\limits_{y\in\Gamma}m_k(t,x,y)
\]
and note that
\begin{align*}
\sum_{x\in \mathbb{Z}}m_2(t,x,\Gamma) & =\hat m_2(t,k,\Gamma)\bigg|_{k=0}=2\beta \int_0^t\int_{T^d} \hat m_1(s,-\theta,\Gamma) \hat m_1(s,\theta,\Gamma)\,d\theta\, ds\\
&= 2\beta \int_{T^d}  d\theta \int_0^t \sum\limits_{y_1\in \Gamma}  \sum\limits_{y_2\in \Gamma} \cos((\theta, y_2-y_1)) e^{2s\mathcal{\hat L}(\theta)}\,ds\\
& = \beta \int_{T^d} \frac{(e^{2t\mathcal{\hat L}(\theta)}-1)(\sum\limits_{y_1\in \Gamma}  \sum\limits_{y_2\in \Gamma} \cos((\theta, y_2-y_1)))}{\mathcal{\hat L}(\theta)}\,d\theta.
\end{align*}
If the random walk generated by the operator $\mathcal{L}$ is recurrent, i.e.
\begin{align*}
G_0(0,0)&=\int\limits_0^{\infty}p(t,0)dt=\int\limits_0^{\infty}dt\frac{1}{(2\pi)^d}\int\limits_{T^d}e^{-\kappa t(1-\hat a(k))}\,dk\\
& =\frac{1}{(2\pi)^d}\int\limits_{T^d}\frac{dk}{\kappa(1-\hat a(k))}=\infty,
\end{align*}
then for $t\to\infty$
\begin{align*}
&m_2(t,\Gamma)\sim \hat m_2(t,0,\Gamma)\to \beta \int_{T^d}\frac{\sum\limits_{y_1\in \Gamma}  \sum\limits_{y_2\in \Gamma} \cos((\theta, y_2-y_1))\,d\theta}{-\mathcal{\hat L}(\theta)}\to \infty,\\
& m_2(t,y)\sim \beta (2\pi)^d G_0(0,0) \to \infty.
\end{align*}
At the physical level this is a manifestation of the high irregularity of the field $n(t,y)$. For large $t$ such a field (in which the density is meant to be 1!) consists of big islands separated by large distances (the phenomenon which has different names: clusterization, clumping, intermittency, see \cite{Zeldovich,Gartner}). Our goal is to explain this phenomenon.

Note that the recurrence of $X(t)$ with a finite variance of jumps can appear only in dimensions $d=1$ and  $d=2$, and implies that
\[
\int\limits_{T^d}\frac{dk}{1-\hat a(k)}=\infty.
\]

If, however, the process $X(t)$ is transient, i.e
\[
\int\limits_{T^d}\frac{dk}{1-\hat a(k)}<\infty,
\]
then field $n(t,y)$ converges in law to a steady state (statistical equilibrium), see~\cite{Molchanov Whitmeyer}.

On the other hand, if we consider the sums
\[
\sum\limits_{y\in \mathbb{Z}^{d}}n(t,x,y)=n_x(t),
\]
i.e. the total number of particles in the subpopulation,
then they are independent Galton-Watson processes and the generating function
\[
\varphi(t,z)=\Expect z^{n_x(t)}
\]
is the solution of the differential equation
\begin{equation}\label{generating_f}
\begin{aligned}
\frac{\partial \varphi(t,z)}{\partial t} & = \beta(\varphi(t,z)-1)^2, \\
\varphi(0,z) & = z.
\end{aligned}
\end{equation}

\begin{proposition}
\begin{equation}\label{prop_1}
\Expect [n_x(t)|n_x(t)>0] = \beta t+1.
\end{equation}
\end{proposition}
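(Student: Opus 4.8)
The plan is to reduce the conditional expectation to a ratio of two quantities that are directly accessible from the Galton--Watson generating function $\varphi(t,z)$ in \eqref{generating_f}. Since $n_x(t)$ is a nonnegative integer-valued random variable, the event $\{n_x(t)=0\}$ contributes nothing to the mean, so
\[
\Expect n_x(t) = \Expect[n_x(t)\mid n_x(t)>0]\,P(n_x(t)>0),
\]
and hence $\Expect[n_x(t)\mid n_x(t)>0] = \Expect n_x(t)/P(n_x(t)>0)$. It therefore suffices to compute the mean $\Expect n_x(t)$ and the survival probability $P(n_x(t)>0)=1-\varphi(t,0)$ separately.

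First I would compute the mean. Differentiating \eqref{generating_f} in $z$ gives $\partial_t \varphi_z = 2\beta(\varphi-1)\varphi_z$. Evaluating at $z=1$ and using $\varphi(t,1)\equiv 1$ (total probability, no explosion in the critical regime) makes the right-hand side vanish, so $\varphi_z(t,1)$ is constant in $t$; together with $\varphi_z(0,1)=1$ this yields $\Expect n_x(t)=\varphi_z(t,1)\equiv 1$. This is exactly the conservation law already noted for the critical process.

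Next I would compute the extinction probability $q(t):=\varphi(t,0)=P(n_x(t)=0)$. Setting $z=0$ in \eqref{generating_f} turns the problem into the scalar ODE $q'(t)=\beta(q(t)-1)^2$ with $q(0)=0$. The substitution $w=q-1$ gives $w'=\beta w^2$ with $w(0)=-1$, which separates to $-1/w=\beta t+1$, so $w(t)=-1/(\beta t+1)$ and $q(t)=\beta t/(\beta t+1)$. Consequently $P(n_x(t)>0)=1-q(t)=1/(\beta t+1)$.

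Combining the two computations gives $\Expect[n_x(t)\mid n_x(t)>0]=\Expect n_x(t)/P(n_x(t)>0)=1\cdot(\beta t+1)=\beta t+1$, as claimed. I expect no serious obstacle here: the only point that needs care is the justification of $\varphi(t,1)\equiv 1$ (equivalently, nonexplosion of the critical Galton--Watson process), which guarantees both that $\Expect n_x(t)$ is finite and equal to $1$ and that the decomposition of the conditional expectation is legitimate. Everything else reduces to an elementary separable ODE.
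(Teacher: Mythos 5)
Your proof is correct, and its skeleton---writing $\Expect[n_x(t)\mid n_x(t)>0]=\Expect n_x(t)/P\{n_x(t)>0\}$ and extracting both factors from the ODE \eqref{generating_f}---is the same as the paper's. The difference lies in how the two factors are obtained. The paper integrates the Riccati equation in full, arriving at the explicit generating function \eqref{eq_GS}, reads off the entire distribution $P\{n_x(t)=k\}=(\beta t)^{k-1}/(\beta t+1)^{k+1}$, and then sums; you instead compute only the two scalar facts you need: the survival probability, by solving the ODE at the single point $z=0$ (a separable equation for $q(t)=\varphi(t,0)$), and the mean $\Expect n_x(t)=1$, by differentiating the ODE at $z=1$ and observing that $\varphi_z(t,1)$ is conserved. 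Your route is leaner and also makes explicit a step the paper leaves implicit: the paper's chain of equalities silently invokes $\Expect n_x(t)=1$ (it follows from the conservation law $\sum_y m_1(t,x,y)=\sum_y p(t,x,y)=1$ established earlier), whereas you prove it directly and correctly flag the one point needing justification, namely $\varphi(t,1)\equiv 1$, which indeed holds since $w=\varphi(\cdot,1)-1$ solves $w'=\beta w^2$ with $w(0)=0$. What the explicit solution buys the paper is reuse: \eqref{eq_GS} and the resulting distribution of $n_x(t)$ are needed immediately afterwards for the exponential limit theorem and for the potential $\gamma(t,z)$ in Section~\ref{sec:main}, so solving the equation once in closed form is the economical choice globally, even though your argument is the more economical proof of this proposition taken in isolation.
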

\begin{proof}
By integrating \eqref{generating_f} we obtain
\begin{equation}\label{eq_GS}
\varphi(t,z) =1-\frac{1-z}{(\beta t+1)-\beta t z}.
\end{equation}
Therefore,
\begin{align}\label{P_n_x(t)=0}
P\{n_x(t)=0\} &=\varphi(t,z)\bigg|_{z=0}=1-\frac{1}{\beta t+1}\quad
\text{for}~ k\ge 1,\\
\notag
P\{n_x(t)=k\}&=\frac{1}{k!}\frac{\partial\varphi^k (t,z)}{\partial z^k}\bigg|_{z=0}=\frac{(\beta t)^{k-1}}{(\beta t+1)^{k+1}}\quad \text{for}~ k\ge 1.
\end{align}

Then
\begin{multline*}
\Expect [n_x(t)|n_x(t)>0]=\sum\limits_{k=1}^{\infty}k P\{n_x(t)=k|n_x(t)>0\} \\=\frac{\sum\limits_{k=1}^{\infty} k P\{n_x(t)=k\}}{P\{n_x(t)>0\}}
\frac{\Expect [n_x(t)]}{P\{n_x(t)>0\}}=\beta t+1.
\end{multline*}
\end{proof}

\begin{theorem}
If $t\to\infty$ and $s>0$ then
\[
P\bigg\{\frac{n_x(t)}{\beta t+1}>s\bigg|n_x(t)>0\bigg\}\to e^{-s}.
\]
\end{theorem}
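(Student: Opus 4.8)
The plan is to exploit the fact that the exact distribution of $n_x(t)$ has already been computed in the proof of the Proposition, so no further analysis of the differential equation \eqref{generating_f} is needed. First I would assemble the conditional law of $n_x(t)$ given survival. Since $P\{n_x(t)>0\}=1/(\beta t+1)$ and $P\{n_x(t)=k\}=(\beta t)^{k-1}/(\beta t+1)^{k+1}$ for $k\ge 1$, dividing gives
\[
P\{n_x(t)=k\mid n_x(t)>0\}=\frac{1}{\beta t+1}\left(\frac{\beta t}{\beta t+1}\right)^{k-1},\qquad k\ge 1,
\]
which I recognize as a geometric distribution on $\{1,2,\dots\}$ with success probability $p_t=1/(\beta t+1)$, and hence mean $\beta t+1$, consistent with \eqref{prop_1}.

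The second step is to compute the conditional tail exactly. Writing $q_t=1-p_t=\beta t/(\beta t+1)$ and $m=m(t,s)=\lfloor s(\beta t+1)\rfloor$, and using that $n_x(t)$ is integer-valued so that $\{n_x(t)>s(\beta t+1)\}=\{n_x(t)\ge m+1\}$, the geometric tail telescopes to
\[
P\bigl\{n_x(t)>s(\beta t+1)\mid n_x(t)>0\bigr\}=\sum_{k=m+1}^{\infty}p_t q_t^{\,k-1}=q_t^{\,m}=\left(1-\frac{1}{\beta t+1}\right)^{m}.
\]

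The final step is to pass to the limit. Taking logarithms, $m\log q_t=m\log(1-p_t)$, and since $p_t\to 0$ as $t\to\infty$ we have $\log(1-p_t)=-p_t+O(p_t^2)$, while $m=s(\beta t+1)+O(1)=s/p_t+O(1)$. Hence $m\log q_t\to -s$, i.e. the conditional tail tends to $e^{-s}$, which is exactly the claimed statement. I expect no serious obstacle here; the only point requiring a little care is the integer part in the definition of $m$, which I would handle by writing $m=s(\beta t+1)+\vartheta_t$ with $|\vartheta_t|\le 1$, so that the error term contributes $\vartheta_t\log q_t\to 0$ and does not affect the limit.
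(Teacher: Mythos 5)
Your proposal is correct and follows essentially the same route as the paper: both use the explicit law $P\{n_x(t)=k\}=(\beta t)^{k-1}/(\beta t+1)^{k+1}$ obtained from the generating function \eqref{eq_GS}, reduce the conditional tail to a geometric series, and obtain the limit from $\bigl(\tfrac{\beta t}{\beta t+1}\bigr)^{s(\beta t+1)}\to e^{-s}$. Your only refinement is the explicit handling of the integer part via $m=\lfloor s(\beta t+1)\rfloor$, a point the paper's proof passes over silently by writing $(\beta t+1)s$ as a summation limit.
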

\begin{proof}
\begin{equation}
\begin{aligned}
&P\bigg\{\frac{n_x(t)}{\beta t+1}>s\bigg|n_x(t)>0\bigg\}=\frac{P\{n_x(t)>(\beta t+1)s,n_x(t)>0\}}{P\{n_x(t)>0\}}\\
&=\frac{\frac{1}{\beta t+1}-\sum\limits_{k=1}^{(\beta t+1)s}\frac{(\beta t)^{k-1}}{(\beta t+1)^{k+1}}}{\frac{1}{\beta t +1}}=1-\frac{1}{\beta t}\sum\limits_{k=1}^{(\beta t+1)s}\bigg(\frac{\beta t}{\beta t+1}\bigg)^k\\
& =1-\frac{1-\bigg(\frac{\beta t}{\beta t+1}\bigg)^{(\beta t+1)s}}{\bigg(\beta t+1\bigg)\bigg(1-\frac{\beta t}{\beta t+1}\bigg)}\to 1-(1-e^{-s})=e^{-s}.
\end{aligned}
\end{equation}
\end{proof}

We have the following situation: the majority of the subpopulations $n_x(t)$ are vanishing on the large time interval $[0,t]$ but for the remaining subpopulations, proportion of which is $\sim (\beta t)^{-1}$, the number of particles will have the order $O(t)$ at least at the level of the first moment. What is the structure of such large conditional subpopulations? We have to answer this question if we want to understand why vanishing of majority of the subpopulations does not affect the convergence to the steady state in the recurrent case $G_0(0,0)=\infty$ and leads to clusterization.

\section{The structure of the subpopulation $n(t,x,y)$, $y\in \mathbb{Z}^{d}$, for fixed $x$, large $t$, and $n_x(t)>0$}
\label{sec:main}

The joint generating function of the processes $n_x(t)$ and $n(t,x,y)$, i.e.
\[
u_{z, z_1}(t,x,y)=\Expect z^{n_x(t)}z_1^{n(t,x,y)}
\]
is the solution of the same equation \eqref{l_eqr} but with the different critical condition:
\begin{equation}
\label{l_eq9}\begin{aligned}
\frac{\partial u_{z, z_1}(t,x,y)}{\partial t} &=\mathcal{L}u_{z, z_1}(t,x,y)+\beta (u_{z, z_1}(t,x,y)-1)^2,\\
u_{z, z_1}(0,x,y) &=\begin{cases}
z, \ x\ne y\\
z z_1, \ x=y.
\end{cases}
\end{aligned}
\end{equation}

We cannot solve the non-linear equation \eqref{l_eq9} but using the differentiation over $z_1$ and substitution $z_1=1$ one can calculate or estimate the conditional moments
$\Expect [n(t,x,y)|n_x(t)>0]$, $\Expect [n(t,x,y)(n(t,x,y)-1)|n_x(t)>0]$ etc.

Put
\[
\tilde m_1(t,x,y,z) =\frac{\partial u_{z,z_1}(t,x,y)}{\partial z_1}\bigg|_{z_1=1}=\Expect_x z^{n_x(t)}n(t,x,y).
\]
Denote $\gamma(t,z):=2\beta(u_{z,z_1=1}(t,x,y)-1)$, then
\begin{equation}\label{l_eq11}
\begin{aligned}
\frac{\partial \tilde m_1(t,x,y,z)}{\partial t} & =\mathcal{L}\tilde m_1(t,x,y,z)+\gamma(t,z)\tilde m_1(t,x,y,z),\\
\tilde m_1(0,x,y,z) & =z\delta(y-x).
\end{aligned}
\end{equation}
Note that due to \eqref{eq_GS} we have
\[
\gamma(t,z)=-2\beta\frac{1-z}{1+\beta t-\beta tz}.
\]

\begin{lemma}
Consider the parabolic problem with the potential, depending only on the time $t$:
\begin{align*}
\frac{\partial u(t,x)}{\partial t} & =\mathcal{L}u(t,x)+\gamma(t)u(t,x),\\
u(0,x)  & =\varphi(x).
\end{align*}
Then
\[
u(t,x)=e^{\int\limits_0^t \gamma(s)\,ds}\sum\limits_v p(t,x-v)\varphi(v).
\]
\end{lemma}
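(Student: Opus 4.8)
The plan is to remove the time-dependent potential by an integrating factor and reduce the problem to the free evolution governed by $\mathcal{L}$. The whole simplification rests on one observation: since $\gamma$ depends only on $t$ while $\mathcal{L}$ acts only on the spatial variable, multiplication by a scalar function of $t$ commutes with $\mathcal{L}$.

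First I would set
\[
w(t,x) := e^{-\int_0^t \gamma(s)\,ds}\,u(t,x)
\]
and differentiate in $t$. By the product rule and the given equation,
\[
\frac{\partial w}{\partial t} = e^{-\int_0^t \gamma(s)\,ds}\Big(\frac{\partial u}{\partial t} - \gamma(t)u\Big) = e^{-\int_0^t \gamma(s)\,ds}\,\mathcal{L}u = \mathcal{L}w,
\]
where the last equality holds because the scalar factor $e^{-\int_0^t \gamma}$ may be pulled through $\mathcal{L}$, and the potential terms $\gamma u$ cancel. The initial condition is preserved, $w(0,x)=\varphi(x)$, since the exponential equals $1$ at $t=0$. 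Thus $w$ solves the potential-free Cauchy problem $\partial_t w = \mathcal{L}w$, $w(0,\cdot)=\varphi$.

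Next I would identify $w$ explicitly. The function $(t,x)\mapsto \sum_v p(t,x-v)\varphi(v)$ solves exactly this problem: applying $\mathcal{L}$ termwise and using that each kernel $p(t,\,\cdot-v)$ satisfies \eqref{eq2} (by spatial homogeneity $p(t,x,v)=p(t,x-v)$) shows it is a solution, while at $t=0$ it reduces to $\sum_v \delta(x-v)\varphi(v)=\varphi(x)$. By uniqueness for this linear lattice Cauchy problem, $w(t,x)=\sum_v p(t,x-v)\varphi(v)$. Multiplying back by $e^{\int_0^t \gamma}$ yields the claimed formula, the symmetry $p(t,z)=p(t,-z)$ making $p(t,x-v)$ interchangeable with $p(t,v-x)$.

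The differentiation is routine; the only points requiring a word of justification are that the scalar time factor commutes with $\mathcal{L}$ (immediate, as $\mathcal{L}$ differentiates only in $x$) and the uniqueness of the solution to the homogeneous problem, which is what legitimizes reading off $w$ from the convolution ansatz. Convergence of the lattice sum is not an obstacle in the intended application, where $\varphi$ has finite support (for instance $\varphi(x)=z\,\delta(y-x)$ as in \eqref{l_eq11}).
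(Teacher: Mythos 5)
Your proof is correct, and its first half coincides with the paper's: you remove the time-dependent potential with the integrating factor $e^{-\int_0^t\gamma(s)\,ds}$ (the paper writes the same substitution in the other direction, $u=e^{\int_0^t\gamma(s)\,ds}v$), using precisely the observation that a scalar function of $t$ commutes with $\mathcal{L}$, and you arrive at the same free problem $\partial_t w=\mathcal{L}w$, $w(0,\cdot)=\varphi$. You diverge in the second half. The paper solves the free problem constructively via the Fourier transform on the torus: $\hat v(t,k)=e^{-t\kappa(1-\hat a(k))}\hat\varphi(k)=\hat p(t,k)\hat\varphi(k)$, and inversion yields the convolution $\sum_v p(t,x-v)\varphi(v)$ directly, so no uniqueness statement is ever invoked --- the formula is derived rather than verified. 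You instead check the convolution ansatz termwise (each kernel $p(t,\cdot-v)$ satisfies the Kolmogorov equation \eqref{eq2}) and then appeal to uniqueness for the lattice Cauchy problem. That appeal is legitimate but deserves the one-line justification you omit: $\mathcal{L}$ is a \emph{bounded} operator on $\ell^\infty(\mathbb{Z}^d)$ with norm at most $2\kappa$, so for two bounded solutions the difference $\delta(t)=\sup_x|w_1(t,x)-w_2(t,x)|$ satisfies $\delta(t)\le 2\kappa\int_0^t\delta(s)\,ds$, and Gronwall forces $\delta\equiv 0$; your closing remark that $\varphi$ has finite support in the intended application (so all functions involved are bounded) is exactly what keeps you inside this uniqueness class. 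On balance your route is more elementary --- no Fourier analysis at all --- at the price of importing a uniqueness input; the paper's Fourier computation costs a little more machinery but buys the explicit identity $\hat p(t,k)=e^{-t\kappa(1-\hat a(k))}$, which it immediately reuses in the subsequent non-homogeneous (Duhamel) lemma.
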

\begin{proof}
Consider
\[
u(t,x)=e^{\int\limits_0^t \gamma(s)\,ds}v(t,x),
\]
then
\begin{align*}
\ \frac{\partial v(t,x)}{\partial t} & =\mathcal{L}v(t,x), \\
v(0,x) & =\varphi(x).
\end{align*}
Taking a Fourier transform gives:
\begin{align*}
\frac{\partial \hat{v}(t,k)}{\partial t} & =\kappa(\hat{a}(k)-1)\hat{v}(t,k), \\
\hat{v}(0,k) & =\hat{\varphi}(k).
\end{align*}
Note that
\[
\hat{p}(t,k)=\sum\limits_x p(t,0,x)e^{i(k,x)}=e^{-t\kappa(1-\hat{a}(k))},
\]
then
\begin{align*}
\hat{v}(t,k)&=e^{-t \kappa(\hat{a}(k)-1)}\hat{\varphi}(k)=\hat{p}(t,k)\hat{\varphi}(k),\\
v(t,x)&=\sum\limits_z p(t,x-z)\varphi(z),
\end{align*}
Therefore, we obtain
\[
u(t,x)=e^{\int\limits_0^t \gamma(s)\,ds}\sum\limits_v p(t,x-v)\varphi(v).
\]
\end{proof}

\begin{lemma}\label{lemma_2_2}
\begin{equation}\label{l_eq15}
\Expect [n(t,x,y)|n_x(t)=k]=kp(t,x,y).
\end{equation}
\end{lemma}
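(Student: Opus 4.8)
The plan is to compute the mixed generating function $\tilde m_1(t,x,y,z)=\Expect[z^{n_x(t)}n(t,x,y)]$ in closed form and then read off the conditional expectation by matching powers of $z$. The starting observation is purely algebraic:
\[
\tilde m_1(t,x,y,z)=\sum_{k\ge 0} z^k\,\Expect\!\left[n(t,x,y)\mathbf 1\{n_x(t)=k\}\right]=\sum_{k\ge 0} z^k\,\Expect[n(t,x,y)\mid n_x(t)=k]\,P\{n_x(t)=k\}.
\]
Hence it suffices to prove the identity $\tilde m_1(t,x,y,z)=p(t,x,y)\,z\,\partial_z\varphi(t,z)$, because $z\,\partial_z\varphi(t,z)=\sum_k k\,z^k P\{n_x(t)=k\}$; comparing coefficients of $z^k$ and dividing by $P\{n_x(t)=k\}$ then yields exactly $\Expect[n(t,x,y)\mid n_x(t)=k]=k\,p(t,x,y)$.

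To obtain $\tilde m_1$ explicitly I would apply the preceding lemma to the linear problem \eqref{l_eq11}. Since the potential $\gamma(t,z)$ does not depend on the spatial variable, the lemma applies directly with initial data $\varphi(v)=z\,\delta(v-x)$, giving
\[
\tilde m_1(t,x,y,z)=e^{\int_0^t\gamma(s,z)\,ds}\sum_v p(t,y-v)\,z\,\delta(v-x)=z\,p(t,x,y)\,e^{\int_0^t\gamma(s,z)\,ds}.
\]
The only computation left is the time integral of $\gamma(s,z)=-2\beta(1-z)/(1+\beta s(1-z))$, which equals $-2\log\bigl(1+\beta t(1-z)\bigr)$, so that $e^{\int_0^t\gamma(s,z)\,ds}=(1+\beta t-\beta t z)^{-2}$ and therefore $\tilde m_1(t,x,y,z)=z\,p(t,x,y)/(1+\beta t-\beta t z)^2$.

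Finally I would connect this to $\varphi$. Differentiating the explicit formula \eqref{eq_GS}, $\varphi(t,z)=1-(1-z)/(1+\beta t-\beta t z)$, gives $\partial_z\varphi(t,z)=(1+\beta t-\beta t z)^{-2}$, whence $z\,\partial_z\varphi(t,z)=z/(1+\beta t-\beta t z)^2$. This is precisely $\tilde m_1(t,x,y,z)/p(t,x,y)$, establishing the identity sought in the first paragraph, and the claim follows.

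The step I expect to be the main obstacle is the clean application of the preceding lemma: one must recognize that the spatial operator $\mathcal L$ acts on $y$ (equivalently on $y-x$ by homogeneity), while the full nonlinear feedback is carried by $\gamma$, which is spatially constant. This is what forces $\tilde m_1$ to factor as the free propagator $p(t,x,y)$ times the scalar exponential, with no mixing between position and genealogy. Everything afterwards is bookkeeping: the two elementary computations — the integral of $\gamma$ and the derivative of $\varphi$ — must reproduce the same denominator $(1+\beta t-\beta t z)^2$, and it is exactly this matching that makes the factor $k$ emerge.
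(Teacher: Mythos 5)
Your proof is correct and takes essentially the same route as the paper's: solve the linear problem \eqref{l_eq11} via the preceding lemma with the spatially constant potential, compute $e^{\int_0^t\gamma(s,z)\,ds}=(1+\beta t-\beta t z)^{-2}$, and identify coefficients of $z^k$ in the resulting generating function. The only cosmetic difference is that you recognize this factor as $\partial_z\varphi(t,z)$ from \eqref{eq_GS} and use $z\,\partial_z\varphi(t,z)=\sum_k k z^k P\{n_x(t)=k\}$, whereas the paper expands $(1-\beta t z/(\beta t+1))^{-2}$ in a Taylor series and matches against the explicit probabilities $P\{n_x(t)=k\}$ --- the same coefficient comparison in different clothing.
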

\begin{proof}
Let $X,Y$ be two integer-valued nonnegative random variables then
\begin{align*}
P_{k,l}&=P\{X=k,Y=l\},\\
\Phi(z,z_1)&=\Expect z^X z_1^Y,\\
\frac{\partial \Phi}{\partial z_1}\bigg|_{z_1=1}&=\Expect z^XY=\tilde m_1(z).
\end{align*}
Then
\begin{align*}
\tilde m_1(z)&=\sum\limits_{k=0}^\infty z^k\sum\limits_{l=0}^\infty P_{k,l}\cdot l=\sum\limits_{k=0}^\infty z^k \sum\limits_{l=0}^\infty l\cdot P\{l|X=k\}P\{X=k\}\\
& =\sum\limits_{k=0}^\infty z^kP\{X=k\}\Expect[Y|X=k].
\end{align*}
Let us return to the equation \eqref{l_eq11}. Note that
\[
e^{\int\limits_0^t\gamma(s)\,ds}=e^{-2\beta\int\limits_0^t\frac{(1-z)\,ds}{\beta s(1-z)+1}}=e^{-2\ln(\beta t(1-z)+1)}=\frac{1}{(\beta t+1-\beta t z)^2},
\]
then
\begin{multline*}
\tilde m_1(t,x;z,y)=\Expect(z^{n_x(t)}n(t,x,y))=\frac{zp(t,y-x)}{(\beta t+1-\beta t z)^2}=\frac{z p(t,y-x)}{(\beta t+1)^2} \\
\times \sum\limits_{k=1}^\infty k\bigg(\frac{\beta t z}{\beta t+1}\bigg)^{k-1}=\sum\limits_{k=1}^\infty \frac{k(\beta t)^{k-1}}{(\beta t+1)^{k+1}}p(t,x,y)z^k.
\end{multline*}
We used here the expansion of $\bigg(1-\frac{\beta t z}{\beta t+1}\bigg)^{-2}$ into the Taylor series. Since
\[
\frac{(\beta t)^{k-1}}{(\beta t+1)^{k+1}}=P\{n_x(t)=k\}
\]
we have finally that
\[
\Expect [n(t,x,y)|n_x(t)=k]=kp(t,x,y).
\]
\end{proof}
\begin{theorem} For $t\to \infty$ we have
\[
\Expect [n(t,x,y)|n_x(t)>0]\sim \beta tp(t,x,y).
\]
\end{theorem}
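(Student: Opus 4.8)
The plan is to combine Lemma~\ref{lemma_2_2} with the explicit law of $n_x(t)$ through the standard conditioning identity, exactly as in the derivation of~\eqref{prop_1}. First I would partition on the value of $n_x(t)$ and write
\[
\Expect [n(t,x,y)\mid n_x(t)>0]=\sum_{k=1}^{\infty}\Expect [n(t,x,y)\mid n_x(t)=k]\,P\{n_x(t)=k\mid n_x(t)>0\}.
\]
Since every summand is nonnegative, the interchange of the expectation with the sum over $k$ is legitimate by monotone convergence, so this identity is rigorous rather than merely formal.

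Second, I would substitute $\Expect [n(t,x,y)\mid n_x(t)=k]=k\,p(t,x,y)$ from Lemma~\ref{lemma_2_2} and pull the factor $p(t,x,y)$ out of the sum, which gives
\[
\Expect [n(t,x,y)\mid n_x(t)>0]=p(t,x,y)\,\frac{\sum_{k=1}^{\infty}k\,P\{n_x(t)=k\}}{P\{n_x(t)>0\}}=p(t,x,y)\,\frac{\Expect\,n_x(t)}{P\{n_x(t)>0\}}.
\]
The numerator is the mean subpopulation size $\Expect\,n_x(t)=\sum_{y}p(t,x,y)=1$, which is the conservation law already established for the density of the total population (equivalently, a direct resummation of the explicit distribution in~\eqref{P_n_x(t)=0} yields $1$). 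The denominator is $P\{n_x(t)>0\}=1/(\beta t+1)$, again from~\eqref{P_n_x(t)=0}.

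Third, these two evaluations produce the exact identity
\[
\Expect [n(t,x,y)\mid n_x(t)>0]=(\beta t+1)\,p(t,x,y),
\]
and letting $t\to\infty$ gives $(\beta t+1)\,p(t,x,y)\sim\beta t\,p(t,x,y)$, which is the claim. There is essentially no hard analytic step: once Lemma~\ref{lemma_2_2} supplies the linear-in-$k$ conditional mean, the statement reduces to the same bookkeeping that produced~\eqref{prop_1}, and the asymptotic equivalence is simply $\beta t+1\sim\beta t$. The only points requiring care are the justification of exchanging summation and expectation (settled by nonnegativity as above) and the explicit identification $\Expect\,n_x(t)=1$, which is precisely what converts the factor $1/P\{n_x(t)>0\}$ into the growth rate $\beta t+1$; I would flag this identity explicitly rather than leave it implicit.
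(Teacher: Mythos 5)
Your proof is correct and takes essentially the same route as the paper: the same partition over the events $\{n_x(t)=k\}$, the same substitution of Lemma~\ref{lemma_2_2}, and the same exact identity $\Expect[n(t,x,y)\mid n_x(t)>0]=(\beta t+1)\,p(t,x,y)$ followed by $\beta t+1\sim\beta t$. The only cosmetic difference is that you evaluate $\sum_{k\ge 1}k\,P\{n_x(t)=k\}$ as $\Expect n_x(t)=1$ (mirroring the paper's own proof of \eqref{prop_1}), whereas the paper sums the geometric series $\sum_{k\ge 1}k\bigl(\tfrac{\beta t}{\beta t+1}\bigr)^{k-1}=(\beta t+1)^2$ directly --- identical bookkeeping either way.
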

\begin{proof}
\begin{align*}
& \Expect[n(t,x,y)|n_x(t)>0]=\sum\limits_{l=1}^{\infty}lP\{n(t,x,y)=l|n_x(t)>0\}\\
&=\sum\limits_{l=1}^{\infty}l\frac{P\{n(t,x,y)=l,n_x(t)>0\}}{P\{n_x(t)>0\}}=\frac{\sum\limits_{l=1}^{\infty}lP\{n(t,x,y)=l,n_x(t)>0\}}{\sum\limits_{k=1}^{\infty}P\{n_x(t)=k\}}\\
&=\frac{\sum\limits_{l=1}^{\infty}\sum\limits_{k=1}^{\infty} l P\{n(t,x,y)=l,n_x(t)=k\}}{\sum\limits_{k=1}^{\infty}P\{n_x(t)=k\}}\\
&=\frac{\sum\limits_{l=1}^{\infty}\sum\limits_{k=1}^{\infty} l P\{n_x(t)=k\} P\{n(t,x,y)=l|n_x(t)=k\}}{\sum\limits_{k=1}^{\infty}P\{n_x(t)=k\}}\\
& =\frac{\sum\limits_{k=1}^{\infty}P\{n_x(t)=k\}\Expect[n(t,x,y)|n_x(t)=k]}{\sum\limits_{k=1}^{\infty}P\{n_x(t)=k\}}.
\end{align*}
Then using \eqref{l_eq1} we have
\begin{align*}
& \Expect [n(t,x,y)|n_x(t)>0]=\frac{p(t,x,y)\sum\limits_{k=1}^{\infty}k\frac{(\beta t)^{k-1}}{(\beta t+1)^{k+1}}}{\frac{1}{\beta t+1}}\\
& =\frac{p(t,x,y)}{\beta t+1}\sum\limits_{k=1}^{\infty}k\bigg(\frac{\beta t}{\beta t+1}\bigg)^{k-1}=\frac{p(t,x,y)}{(\beta t+1)(1-\frac{\beta t}{\beta t+1})^2}\\
& =(\beta t+1)p(t,x,y)\sim \beta tp(t,x,y).
\end{align*}
\end{proof}

Then for $|x-y|=O(\sqrt{t})$
\[
\Expect [n(t,x,y)|n_x(t)>0]=\begin{cases}
\sqrt{t}, \ d= 1;\\
1, \ d=2;\\
\frac{1}{t^{d/2-1}}, \ d\ge 3.
\end{cases}
\]

The formula \eqref{l_eq15} is not difficult to understand without calculations. Each particle, among $n_x(t)=k$ particles, performs on $[0,t]$ a random walk with the generator $\mathcal{L}$ (which is the union of the pieces of paths between the successive transformations). These paths are highly dependent, but for calculating the first moment this dependence is irrelevant.

For the second (factorial) moment we can use the same approach. Differentiating \eqref{l_eq9} two times over $z_1$ and substituting $z_1=1$ we get for
\[
\tilde m_2(t,x,y,z)=\Expect_x z^{n_x(t)}[n(t,x,y)(n(t,x,y)-1)]
\]
the equation
\begin{align*}
 \frac{\partial \tilde m_2(t,x,y,z)}{\partial t}&=\mathcal{L}\tilde m_2(t,x,y,z)+\gamma(t,z)\tilde m_2(t,x,y,z)+2\beta\tilde m_1^2(t,x,y,z),\\
 \tilde m_2(0,x,y,z)&=0.
\end{align*}
Here
\[
\gamma(t,z)=-2\beta(1-u_{z,z_1})|_{z_1=1}=-\frac{2\beta(1-z)}{(\beta t+1)-\beta t z}
\]
is the same space independent potential as in $\tilde m_1(t,x,y,z)$, see above.

\begin{lemma}
Consider the non-homogeneous parabolic problem
\begin{align*}
\frac{\partial u(t,x)}{\partial t} & =\mathcal{L}u(t,x)+\gamma(t)u(t,x)+f(t,x),\\
u(0,x) & =0.
\end{align*}
Then the substitution $u=e^{\int\limits_0^t\gamma(s)\,ds} v(t,x)$ leads to the equation
\begin{equation}\label{l_eq18}
\begin{aligned}
\frac{\partial v(t,x)}{\partial t}&=\mathcal{L}v(t,x)+\tilde f(t,x),\quad \tilde{f}(t,x)=e^{-\int\limits_0^t\gamma(s)\,ds} f(t,x),\\
v(0,x)&=0
\end{aligned}
\end{equation}
and its solution is given by Duhamel's formula
\begin{equation}\label{l_eq19}
v(t,x)=\int\limits_0^t \bigg[\sum\limits_z p(t-s,x-z)\tilde{f}(s,z)\bigg]\,ds.
\end{equation}
\begin{proof}
Taking a Fourier transform gives:
\begin{align*}
\frac{\partial \hat{v}(t,k)}{\partial t}&=\kappa(\hat{a}(k)-1)\hat{v}(t,k)+\hat{\tilde{f}}(t,k),\\
\hat{v}(t,k)&=e^{t\kappa(\hat{a}(k)-1)}h(t),\ \frac{\partial h(t,k)}{\partial t}=\hat{\tilde{f}}(t,k)e^{-t\kappa(\hat{a}(k)-1)},\\
h(t,k)&=\int\limits_0^t \hat{\tilde{f}}(s,k)e^{-s\kappa(\hat{a}(k)-1)}\,ds,\\
\hat{v}(t,k)&= e^{t\kappa(\hat{a}(k)-1)} \int\limits_0^t \hat{\tilde{f}}(s,k)e^{-s\kappa(\hat{a}(k)-1)}\,ds=\int\limits_0^t e^{(s-t)\kappa(1-\hat{a}(k))}\hat{\tilde{f}}(s,k)\,ds.
\end{align*}

Note that
\[
\hat{p}(t-s,k)=\sum\limits_x p(t-s,0,x)e^{i(k,x)}=e^{(s-t)\kappa(1-\hat{a}(k))}
\]
then
\begin{align*}
\hat{v}(t,k)&=\int\limits_0^t \hat{p}(t-s,k) \hat{\tilde{f}}(s,k)\,ds,\\
v(t,x)&=\int\limits_0^t \bigg[\sum\limits_z p(t-s,x-z)\tilde{f}(s,z)\bigg]\,ds.
\end{align*}
\end{proof}
\end{lemma}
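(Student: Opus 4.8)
The plan is to establish the two assertions of the lemma in turn: that the exponential gauge transformation $u=E(t)v$, with $E(t)=e^{\int_0^t\gamma(s)\,ds}$, exactly removes the potential term $\gamma(t)u$, and that the resulting potential-free inhomogeneous equation \eqref{l_eq18} is solved by the Duhamel convolution \eqref{l_eq19}. Both steps are constant-coefficient computations, and I would carry out the second one by Fourier transform, repeating the scheme already used in the previous lemma.

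First I would verify the substitution directly. Writing $E(t)=e^{\int_0^t\gamma(s)\,ds}$ one has $E'(t)=\gamma(t)E(t)$ and $E(0)=1$. Since $\mathcal{L}$ differentiates only in the space variable, it commutes with multiplication by the purely temporal factor $E(t)$, so $\mathcal{L}(Ev)=E\,\mathcal{L}v$. Substituting $u=Ev$ and using the product rule, $\partial_t u=\gamma E v+E\,\partial_t v$; inserting this into $\partial_t u=\mathcal{L}u+\gamma u+f$ makes the two $\gamma E v$ terms cancel, leaving $E\,\partial_t v=E\,\mathcal{L}v+f$. Dividing by $E(t)>0$ gives $\partial_t v=\mathcal{L}v+E^{-1}f=\mathcal{L}v+\tilde f$, while $v(0,x)=u(0,x)/E(0)=0$, which is precisely \eqref{l_eq18}.

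Next I would solve \eqref{l_eq18} by Fourier transform in $x$. Under the transform $\mathcal{L}$ becomes multiplication by the symbol $\kappa(\hat a(k)-1)$, so the equation collapses to the scalar linear ODE $\partial_t\hat v=\kappa(\hat a(k)-1)\hat v+\hat{\tilde f}$ with $\hat v(0,k)=0$. Variation of constants yields $\hat v(t,k)=\int_0^t e^{(s-t)\kappa(1-\hat a(k))}\hat{\tilde f}(s,k)\,ds$. The one substantive observation is that the integrating factor $e^{(s-t)\kappa(1-\hat a(k))}$ is exactly $\hat p(t-s,k)$, the Fourier symbol of the transition kernel; hence $\hat v(t,k)=\int_0^t\hat p(t-s,k)\,\hat{\tilde f}(s,k)\,ds$, and inverting the transform converts each product into a spatial convolution, producing $v(t,x)=\int_0^t\sum_z p(t-s,x-z)\tilde f(s,z)\,ds$, i.e. \eqref{l_eq19}.

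The hard part, such as it is, is purely a matter of justification rather than of ideas: one must check that the Fourier inversion is legitimate and that the time integral may be exchanged with the inverse transform and with the spatial summation. This is routine under the running hypotheses, since the superexponentially light tails of $a(z)$ make $\hat p(t,k)$ smooth and all the relevant sums absolutely convergent, so Fubini and dominated convergence apply. I therefore expect no genuine obstacle: the lemma is a textbook variation-of-constants argument whose only real content is the bookkeeping identification of the integrating factor with the transform of the transition kernel $p$.
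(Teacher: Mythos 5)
Your proposal is correct and follows essentially the same route as the paper: Fourier transform in $x$, variation of constants for the resulting scalar ODE $\partial_t\hat v=\kappa(\hat a(k)-1)\hat v+\hat{\tilde f}$, identification of the integrating factor $e^{(s-t)\kappa(1-\hat a(k))}$ with $\hat p(t-s,k)$, and inversion of the transform into the spatial convolution. Your explicit verification of the gauge substitution $u=e^{\int_0^t\gamma(s)\,ds}v$ (which the paper's proof takes for granted, beginning directly with the transformed equation) and your remark on justifying the interchange of integral, sum, and inverse transform are welcome additions but do not change the substance of the argument.
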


Combining together formulas \eqref{l_eq18}, \eqref{l_eq19} and expressions
\[
\tilde m_1(t,x,y,z)=\frac{z p(t,x,y)}{(\beta t+1-\beta t z)^2},\quad e^{\int\limits_0^t\gamma(s)\,ds}={(\beta t+1-\beta t z)^{-2}}
\]
we can find the conditional second moment
\[
\tilde m_2(t,x,y,z)=2\beta\int\limits_0^t \,ds \frac{\sum\limits_{v} z^2 p^2(s, v-y)p(t-s,x-v)}{(\beta s+1-\beta s z)^2(\beta t+1-\beta t z)^2}
\]

Let us now represent the integrand by the Taylor series. Since
\[
\varphi(z)=\frac{1}{(\beta t+1-\beta t z)^2}=\sum\limits_{k=0}^\infty (k+1)\frac{(\beta t)^{k-1}}{(\beta t+1)^{k+1}}z^k
\]
we will get
\begin{multline*}
 \frac{1}{(\beta t+1-\beta t z)^2}\cdot\frac{1}{(\beta s+1-\beta z s)^2}\\
 =\sum\limits_{n=0}^{\infty}z^n\bigg[\sum\limits_{l=0}^n (n-l+1)(l+1)\frac{(\beta t)^{n-l}}{(1+\beta t)^{n-l+2}}\cdot\frac{(\beta s)^l}{(1+\beta s)^{l+2}}\bigg]
\end{multline*}
i.e. (setting $m=n+2$) we will get finally
\begin{multline*}
\tilde m_2(t,x,y,z)=2\beta \int\limits_0^t \,ds\bigg[\sum\limits_{v\in \mathbb{Z}^{d}}p^2(s,v-y)p(t-s, x-v)\cdot\sum\limits_{m=2}^\infty z^m\frac{(\beta t)^{m-1}}{(1+\beta t)^{m+1}}\\
\times \big[\sum\limits_{l=0}^{m-2} (m-l+1)(l+1)\bigg(\frac{\beta t}{\beta t+1}\bigg)^{-l-1}\frac{(\beta s)^l}{(\beta s+1)^{l+2}}\big]\bigg].
\end{multline*}

Due to lemma \ref{lemma_2_2} we have the following formula for the conditional second moment
\begin{equation}\label{l_w}
\begin{aligned}
M_2(m,t,x,y)&=\Expect_x[n(t,x,y)(n(t,x,y)-1)|n_x(t)=m]\\
& =2\beta\int\limits_0^t \,ds\bigg[\sum\limits_{v\in \mathbb{Z}^{d}}\frac{p^2(s,x-v)p(t-s, v-y)}{(\beta s+1)^2}\\
& \times\bigg(\frac{\beta t+1}{\beta t}\bigg)^{l+1}\sum\limits_{l=0}^{m-2}(m-l+1)(l+1)\bigg(\frac{\beta s}{\beta s+1}\bigg)^l\bigg]
\end{aligned}
\end{equation}

Like in the case of the first moment the most important values of $m$ must be of order $t$. To simplify the calculations we will put $m=t, \beta=1$. Transition to the more general case $m=ct, \beta \ne 1$ is simple.

We start from the rough estimation of the second moment $M_2(t,t,\cdot)$. The const $c$ in the calculations below will be ``universal'' , i.e. its meaning can be different in the neighbouring formulas. We start from several simple inequalities.
\begin{equation*}
\begin{aligned}
& a) \sum\limits_{v\in \mathbb{Z}^{d}}p^2(s,x-v)p(t-s, v-y)\le p(s,0)\sum\limits_v p(s,x-v)p(t-s,v-y)\\
& =p(s,0)p(t,x-y),\ \text{see formula \eqref{l_eq4};}\\
& b) \ p(s,0)\le \frac{c}{(1+s)^{d/2}}, \ s\ge 0,\\
& \text{this is the corollary of the local CLT, see \eqref{l_eq5};}\\
& c) \ \bigg(\frac{1+t}{t}\bigg)^l\le c,\ \ \text{if } l\le m= t;\\
& d) \sum\limits_{l=0}^{t-2}(m-l+1)(l+1)\bigg(\frac{s}{s+1}\bigg)^l\le (t+1)\sum\limits_{l=0}^{\infty}(l+1)\bigg(\frac{s}{s+1}\bigg)^l\\
& \le (t+1)\frac{1}{(1-\frac{s}{s+1})^{2}}=(t+1)(s+1)^2.
\end{aligned}
\end{equation*}

Then formula \eqref{l_w} together with four inequalities above for all $x, y$ gives
\begin{align*}
M_2(t,x,y)&:=M_2(m=t,t,x,y)=\Expect [n(t,x,y)(n(t,x,y)-1)|n_x(t)=t]\\
& \le cp(t,x,y)(t+1)\int\limits_0^t\frac{ds (s+1)^2}{(1+s)^{d/2+2}}\le \frac{c}{t^{d/2-1}}\int\limits_0^t\frac{ds}{(1+s)^{d/2}}
\end{align*}
It gives
\begin{equation}
\label{M2inequalities}
\begin{aligned}
M_2(t,x,y)&\le ct \text{ in dimension } d=1;\\
M_2(t,x,y)&\le c\ln{t} \text{ for } d=2;\\
M_2(t,x,y)&\le \frac{c}{t^{d/2-1}} \text{ for } d\ge3 \ (=\frac{c}{\sqrt t} \text{ for } d=3).
\end{aligned}
\end{equation}
Assume $a(z)$ is bounded with the following asymptotic expansion at infinity:
\begin{equation}\label{a(z)}
a(z)=\sum\limits_{j=0}^N\frac{c_j(\dot{z})}{|z|^{d+\alpha_j}}+O(|z|^{-2d-\alpha-l}), \quad z\to\infty,
\end{equation}
where $\alpha>2$, $\alpha_0=\alpha<\alpha_1<\ldots<\alpha_N$, $\dot{z}=z/|z|$, and $c_0(\dot{z})=c_0(\dot{-z})$ is a positive continuous function on $\mathcal{S}^{d-1}$, $l=1$ if $\alpha>[\alpha]$, $l=2$ if $\alpha=[\alpha]$, $c_0(\dot{z})>0$, and $c_j(\dot{z})$ are sufficiently smooth.
\begin{theorem}
Let condition \eqref{a(z)} be satisfied, for $|x-y|<C\sqrt{t}$ and $t\to\infty$
\begin{equation}\label{th_2_5}
\begin{aligned}
M_2(t,x,y)&\sim C_1t \text{ for } d=1;\\
M_2(t,x,y)&\sim C_2\ln{t} \text{ for } d=2;\\
M_2(t,x,y)&\sim \frac{C_d}{t^{\frac{d}{2}-1}}  \text{ for } d\ge3.
\end{aligned}
\end{equation}
\end{theorem}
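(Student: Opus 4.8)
The upper bounds \eqref{M2inequalities} already follow from the crude estimates $a)$--$d)$; the content of the theorem is the matching lower bounds together with the exact constants $C_1,C_2,C_d$. My plan is therefore to return to the exact representation \eqref{l_w} (with $\beta=1$, $m=t$) and to replace each one-sided estimate by a two-sided asymptotic, valid uniformly on the range of $s$ that actually carries the mass of the $s$-integral. Two ingredients are needed: a sharp asymptotic for the spatial convolution $Q(s):=\sum_{v}p^2(s,v-y)p(t-s,x-v)$, and a sharp asymptotic for the inner sum $T(s,t):=\sum_{l=0}^{t-2}(t-l+1)(l+1)\big(\tfrac{t+1}{t}\big)^{l+1}\big(\tfrac{s}{s+1}\big)^{l}$ that multiplies it in \eqref{l_w}.

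For the convolution I would feed the local CLT \eqref{l_eq5} (legitimate under \eqref{a(z)}, which also supplies the uniform bound $p(s,0)\le c(1+s)^{-d/2}$) into $Q(s)$ and approximate the lattice sum by a Gaussian integral. Computing the resulting Gaussian convolution gives, for $s$ and $t-s$ both large, $Q(s)\sim (2\pi)^{-d}[s(2t-s)]^{-d/2}\,e^{-|x-y|^2/(2t-s)}$; in the regime $s=o(t)$ this collapses to $Q(s)\sim p(t,x-y)\,p(2s,0)$, using $\sum_v p^2(s,v-y)=p(2s,0)$ (from symmetry and Chapman--Kolmogorov). For the inner sum, rescaling $l=ut$ and recognizing a Riemann sum yields $T(s,t)\sim t^{3}g(s/t)$ with $g(\tau)=\int_0^1 u(1-u)\,e^{-u(1-\tau)/\tau}\,du$; since $g(\tau)\sim\tau^2$ as $\tau\to0$, this recovers exactly estimate $d)$, i.e. $T(s,t)\sim t(s+1)^2$ for $s=o(t)$.

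Substituting both asymptotics into \eqref{l_w}, with $s=\tau t$, leads to the single rescaled formula $M_2(t,x,y)\sim \frac{2}{(2\pi)^d}\,t^{2-d}\int \frac{g(\tau)}{\tau^2[\tau(2-\tau)]^{d/2}}\,e^{-|x-y|^2/t(2-\tau)}\,d\tau$, in which, for $|x-y|<C\sqrt t$, the exponential stays bounded and tends to $1$ when $|x-y|=o(\sqrt t)$. The integrand behaves like $\tau^{-d/2}$ as $\tau\to0$, and the three cases of \eqref{th_2_5} are exactly the three convergence behaviors at the origin: for $d=1$ it converges and is carried by $\tau\asymp1$, i.e. $s\asymp t$, giving $M_2\sim C_1 t$; for $d=2$ it diverges logarithmically and, cut off at the microscopic scale $s\gtrsim1$ (that is $\tau\gtrsim1/t$, where the local CLT supplies $p(2s,0)\sim c/s$), produces $M_2\sim C_2\ln t$; for $d\ge3$ the mass sits at $s=O(1)$, where it is cleaner to keep $Q(s)\sim p(t,x-y)p(2s,0)$ and $T\sim t(s+1)^2$ and to integrate $p(2s,0)$ exactly, so that $M_2\sim 2t\,p(t,x-y)\int_0^\infty p(2s,0)\,ds=C_d\,t^{-(d/2-1)}$, the constant being, up to normalization, the Green function $G_0(0,0)=\int_0^\infty p(u,0)\,du<\infty$ of the transient walk.

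The main obstacle is the case $d=1$ (and, generally, the region $s\asymp t$), where none of the crude factorizations survive: there both $Q(s)$ and $T(s,t)$ must be controlled on the full scale, so I need the joint Gaussian convolution asymptotic to hold uniformly for $\tau$ in compact subsets of $(0,1)$, and the Laplace/Riemann-sum passage for $T$ to be uniform as well. The remaining technical points are to show that the two boundary layers $s=O(1)$ and $t-s=O(1)$---where the local CLT degenerates---contribute a vanishing fraction (they have $\tau$-measure $O(1/t)$, with a bounded integrand for $d=1$), and, for $d=2,3$, to justify replacing sums by integrals and to pin down $C_2,C_d$ via the local CLT normalization and the Green function; these steps are routine once the uniform versions of the two asymptotics are established.
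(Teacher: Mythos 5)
Your proposal is sound and shares the paper's skeleton---both start from the exact representation \eqref{l_w}, and both reduce the convolution $\sum_v p^2(s,x-v)p(t-s,v-y)$ to a Gaussian convolution via the local CLT (your $Q(s)\sim(2\pi)^{-d}[s(2t-s)]^{-d/2}e^{-|x-y|^2/(2t-s)}$ is exactly the paper's $p(2s,0)\,p(t-\tfrac{s}{2},x-y)$ expression, obtained there as a lower bound by writing $p^2(s,z)\approx p(2s,0)p(\tfrac{s}{2},z)$, applying Chapman--Kolmogorov, and using \eqref{a(z)} to control the tail region $|x-v|\ge C\sqrt t$)---but your execution differs in two substantive ways. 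First, the inner sum: the paper evaluates $\sum_{l=0}^{m-2}(-l^2+ml+m+1)q^l$ by the closed formula $\sim m/(1-q)^2 = t^3(s+1)^2/(t-s)^2$, which is valid only when $q^m\to 0$, i.e.\ when $t-s\gg s$; the paper sidesteps its breakdown near $s\asymp t$ by truncating the $s$-integral at $t/2$ (for $d=1$) and keeping only one-sided bounds. Your uniform scaling form $T(s,t)\sim t^3 g(s/t)$ with $g(\tau)=\int_0^1 u(1-u)e^{-u(1-\tau)/\tau}\,du$ is the correct description on the whole range, including the regime $s\asymp t$ that carries the mass when $d=1$, and it reduces to the paper's expression via $g(\tau)\sim\tau^2$ when $s=o(t)$. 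Second, the conclusion: the paper proves lower bounds $M_2 > c_d\int_0^t t^3(2t-s)^{-d/2}s^{-d/2}(t-s)^{-2}\,ds$ of the right order and then combines them with the crude upper bounds \eqref{M2inequalities}; since the constants on the two sides do not match, this literally yields only the order equivalences $M_2\asymp t$, $\asymp\ln t$, $\asymp t^{1-d/2}$, not the asymptotic equivalence asserted in \eqref{th_2_5}. Your unified rescaled formula would, once the uniformity of the two asymptotics and the negligibility of the boundary layers $s=O(1)$, $t-s=O(1)$ are verified (available under \eqref{a(z)} through the cited uniform estimates of Molchanov--Whitmeyer), deliver genuine asymptotics with identified constants, e.g.\ the Green-function constant for $d\ge 3$---strictly more than the paper's argument establishes. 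One caveat that your own formula makes visible: because of the surviving factor $e^{-|x-y|^2/(t(2-\tau))}$, no single constant $C_d$ can serve uniformly over the whole range $|x-y|<C\sqrt t$; true equivalence holds with a constant depending on $\lim|x-y|/\sqrt t$ (or cleanly for $|x-y|=o(\sqrt t)$), so the theorem as stated is really an order statement, and your approach quantifies exactly how the constant varies across that range.
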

\begin{proof}
From (\cite{Molchanov Whitmeyer 2018}, Theorem 1.1) there are constants $A$, $\epsilon>0$,
such that the solution $p(t,z)$ has the following asymptotic behavior as $|z|^2\ge At$, $t\ge 0$, $x\to\infty$:
\[
p(t,z)=\frac{t}{|z|^{d+\alpha}}\left[c_0(\dot{z})+O\left(\left(\frac{1+t}{|z|^2}\right)^{\epsilon}\right)\right]+\frac{e^{-\frac{|z|^2}{2t}}}{(2\pi t)^{d/2}}\left(1+O\left(\frac{t^{1/\alpha}}{|z|}\right)\right),\\
\]
and from the local CLT
\[
p(t,z)\sim \frac{e^{-\frac{|z|^2}{2t}}}{(2\pi t)^{d/2}}, \ |z|<C\sqrt{t}, \ t\to\infty,
\]
then
\begin{align*}
& \sum\limits_v p^2(s,x-v)p(t-s,v-y) = \sum\limits_{v:\{|x-v|<C\sqrt{t}\}} p^2(s,x-v)p(t-s,v-y)\\
& +\sum\limits_{v:\{|x-v|\ge C\sqrt{t}\}} p^2(s,x-v)p(t-s,v-y)\\
& \ge \sum\limits_{v:\{|x-v|<C\sqrt{t}\}} \frac{1}{{(4\pi s)^{d/2}}} \cdot \frac{e^{-\frac{(x-v)^2}{2(\frac{s}{2})}}}{(2\pi (\frac{s}{2}) )^{d/2}}\cdot p(t-s,v-y)\\
&+ \sum\limits_{v:\{|x-v|\ge C\sqrt{t}\}} \frac{p(\frac{s}{2},x-v)p(t-s,v-y)}{{(4\pi s)^{d/2}}} \\
& -\sum\limits_{v:\{|x-v|\ge C\sqrt{t}\}} \frac{p(\frac{s}{2},x-v)p(t-s,v-y)}{{(4\pi s)^{d/2}}}= \sum\limits_v \frac{p(\frac{s}{2},x-v)p(t-s,v-y)}{{(4\pi s)^{d/2}}}\\
& -\sum\limits_{v:\{|x-v|\ge C\sqrt{t}\}} \frac{p(\frac{s}{2},x-v)p(t-s,v-y)}{{(4\pi s)^{d/2}}}=\frac{p(t-\frac{s}{2},x-y)}{{(4\pi s)^{d/2}}}-
\end{align*}
\begin{align*}
&-\sum\limits_{v:\{|x-v|\ge C\sqrt{t}\}} \frac{p(\frac{s}{2},x-v)p(t-s,v-y)}{{(4\pi s)^{d/2}}}\frac{p(t-\frac{s}{2},x-y)}{{(4\pi s)^{d/2}}}\\
&-\sum\limits_{v:\{|x-v|\ge C\sqrt{t}\}} \frac{1}{(4\pi s)^{d/2}}\cdot\bigg(\frac{s c_0(\dot{x-v})}{2|x-v|^{d+\alpha}}+\frac{e^{-\frac{|x-v|^2}{2t}}}{(\pi s)^{d/2}}\bigg)\sim\\
& \sim \frac{e^{-\frac{(x-y)^2}{2(t-\frac{s}{2})}}}{(4\pi s )^{d/2}(t-s/2)^{d/2}}>\frac{e^{-1}}{(4\pi s )^{d/2}(t-s/2)^{d/2}}  \ \text{ for }t \to \infty, \ s\in [0,t).
\end{align*}
Due to \eqref{l_w} we obtain
\begin{align*}
 M_2(m,t,x,y)&=\Expect_x[n(t,x,y)(n(t,x,y)-1)|n_x(t)=m]\\
&= 2\beta \bigg(\frac{\beta t+1}{\beta t}\bigg)\cdot \int\limits_0^t \,ds \bigg[ \sum\limits_{v\in \mathbb{Z}^{d}}p^2(s,x-v)p(t-s, v-y)\cdot \frac{1}{(\beta s+1)^2}\\
& \times\sum\limits_{l=0}^{m-2}(-l^2+ml+m+1)\bigg(\frac{s (\beta t+1)}{t (\beta s+1)}\bigg)^l\bigg]
\end{align*}
Denote $q=\frac{s (\beta t+1)}{t (\beta s+1)}$, then for $m=t$ and $\beta=1$
\begin{align*}
& \sum\limits_{l=0}^{m-2}(-l^2+ml+m+1)\bigg(\frac{s (\beta t+1)}{t (\beta s+1)}\bigg)^l=-\frac{q^2+q}{(1-q)^3}+\frac{mq}{(1-q)^2}+\frac{m+1}{1-q}\\
&=\frac{mq}{(1-q)^2}+\frac{1+m+mq+mq^2}{(1-q)^3}= \frac{m(q^2+q+1)+1-q}{(1-q)^2(1+q+q^2)}\sim \frac{m}{(1-q)^2}\\
& \sim \frac{t^3 (s+1)^2}{(t-s)^2},
\end{align*}
then
\[
 M_2(t,x,y)> c_d \int\limits_0^t \frac{t^3}{(2t-s)^{d/2}(s)^{d/2}(t-s)^2}\,ds.
\]
For $d=1$
\begin{align*}
M_2(t,x,y)&> c_1 \int\limits_0^t \frac{t^3}{\sqrt{2t-s}\sqrt{s}(t-s)^2}\,ds>c_1\int\limits_0^{t/2} \frac{t^3}{\sqrt{2t-s}\sqrt{s}(t-s)^2}\,ds\\
&=c_1\frac{t\sqrt{s}\sqrt{2t-s}}{t-s}\bigg|_0^{t/2} \sim \sqrt{3}c_1t= C_1t,
\end{align*}
for $d=2$
\begin{align*}
M_2(t,x,y)&>c_2\int\limits_0^t \frac{t^3}{(2t-s)s(t-s)^2}\,ds>c_2\int\limits_1^{t/2} \frac{t^3}{(2t-s)s(t-s)^2}\,ds\\
& =c_2\bigg(\frac{t}{t-s}-\frac{\ln(2t-s)}{2}+\frac{\ln s}{2}\bigg|_1^{t/2}\bigg)\\
& =c_2\bigg(2-\frac{\ln(3t/2)}{2}+\frac{\ln(t/2)}{2}-\frac{t}{t-1}+\frac{\ln(2t-1)}{2}\bigg)= C_2 \ln t,
\end{align*}
for $d\ge 3$ and constants $q_i (i=0,\ldots,d-1$) and $r_1, r_2$ the integrand $M_2(t,x,y)$ has following form: for $d=2n+1$
\begin{align*}
& M_2(t,x,y)> c_d \int\limits_0^t \frac{t^3}{(2t-s)^{d/2}(s)^{d/2}(t-s)^2}\,ds\\
& > \frac{\sum\limits_{k=0}^{2n} q_k t^{2n}s^{2n-k}}{s^{n-\frac{1}{2}}(2t-s)^{n-\frac{1}{2}}(t-s)t^{2n-1}}\bigg|_1^{t/2}\sim \frac{c_1 t^{2n}}{t^{4n-1}} + \frac{c_2 t^{2n}}{t^{3n-\frac{1}{2}}} \sim\frac{c}{t^{n-\frac{1}{2}}}=\frac{C_d}{t^{\frac{d}{2}-1}}
\end{align*}
and for $d=2n$
\begin{align*}
& M_2(t,x,y)> c_d \int\limits_0^t \frac{t^3}{(2t-s)^{d/2}(s)^{d/2}(t-s)^2}\,ds>\frac{\sum\limits_{k=0}^{2n-1} q_k t^{2n-1}s^{2n-1-k}}{s^{n-1}(2t-s)^{n-1}(t-s)t^{2n-2}}\\
& +\frac{r_1 \ln(2t-s)+ r_2 \ln s}{t^{2n-2}}\bigg|_1^{t/2}\sim \frac{c_1 t^{2n-1}}{t^{4n-3}} + \frac{c_2 t^{2n-1}}{t^{3n-2}}+\frac{c_3 \ln t}{t^{2n-2}} \sim\frac{c}{t^{n-1}}=\frac{C_d}{t^{\frac{d}{2}-1}}.
\end{align*}
Therefore, one can combine this inequalities with \eqref{M2inequalities} to obtain \eqref{th_2_5}.
\end{proof}

\section{Conclusion}
\label{sec:conclusion}

Assuming that $n(0,x)\equiv1$ for the large moment $t$ the majority of the subpopulation $n(t,x,\cdot)$ will degenerate, see \eqref{prop_1} and \eqref{P_n_x(t)=0}. The initial points of all such (non-degenerated) subpopulations form the Bernoulli point field with parameter $p=\frac{1}{1+\beta t}$.

In a neighborhood of each such point the subpopulation forms the island $n(t,x,\cdot)$ containing the random number $n_{x_i}(t)$ of the particles with the mean value $\beta t+1$. Then Theorem 3.1 from \cite{Molchanov Whitmeyer 2018} gives
\[
\Expect n(t,x_{i,t},y)=\Expect [n(t,x_i,y)|n_x(t)>0]\sim\beta t p(t,x_i,y)\sim\frac{\beta t e^{-\frac{(y-x_i)^2}{2t}}}{(2\pi t )^{d/2}},
\]
i.e. the diameter of the subpopulation is $O(\sqrt{t})$.

Consider several cases: $d=1$, $d=2$ and $d\geq3$.
\paragraph{Case $d=1$.} Here the distances between the points $x_{i,t}$ have the geometric distribution with the mean value $\beta t$ and the subpopulations have typical size $\sqrt{t}$ (due to local CLT).

The total population demonstrates the high level of intermittency: large clusters, that is clusters of the diameter $O(\sqrt{t})$, are separated by the empty intervals of the length $O(t)$. Due to \eqref{prop_1} each of such clusters contains about $t$ particles.

\paragraph{Case $d=2$.} Here the typical size of the subpopulation and the typical distance between points $x_{i,t}$ both have order $\sqrt{t}$. However, the population still has fairly large gaps. Let us estimate these gaps.

Consider on the lattice $\mathbb{Z}^2$ the square $[-L, L]\times[-L,L]=Q_L$ with large enough $L$, containing $(2L+1)^2$ point. For $t=0$ any point $x\in Q_L$ contains one particle: at the moment $t\gg 1$ only $\frac{(2L+1)^2}{\beta t}=N_t$ subpopulations started from the points $x_{i,t}$ will survive.

Let us divide $Q_L$ into cells of the size ($\sqrt{t}a(t)\times\sqrt{t}a(t)$) where the function $a(t)$ will be selected later. Number of such cells is $\frac{(2L+1)^2}{t a^2(t)}=N_1(t)$. The probability that one cell does not contain points $x_{i,t}$ equals to
\[
\left(1-\frac{1}{\beta t +1}\right)^{t a^2(t)}=e^{-\frac{a^2(t)}{\beta}}.
\]
Then the mean number of empty cells in $Q_L$ will be
\[
\frac{4L^2}{ta^2(t)}e^{-\frac{a^2(t)}{\beta}}=\mu_t.
\]

Put $a^2(t)=\beta\ln t$. Then $a(t)\sim c\sqrt{\ln t}$ for large $t$.
Hence for $L=t\sqrt{\ln t}$ we obtain $\mu_t\sim\mathrm{const}$, i.e. empty cells have density $O(\frac{1}{t\sqrt{\ln t}})$, which means that in each cube of the size $t\sqrt{\ln t}\times t\sqrt{\ln t}$ there is a gap of the diameter $O(t\sqrt{\ln t})$. This is a very weak intermittency.

\paragraph{Case $d \ge 3$.} Here the population $n(t, x)$ is highly uniform.

\section{Simulation}
\label{sec:simulation}

We call the state of a BRW system the set of pairs $\{x, t\}$, each of which corresponds to a particle located at the point $x \in \mathbb{Z}^d$, that first appeared at the point $x$ at the time $t$. By evolution we mean a jump to another point, splitting or death of a particle. In the process of modeling the transition from one state of the BRW system to another will be carried out by excluding one pair from the set of states and adding to the seat of states of one or several pairs corresponding to the result of the simulated particle evolution.

\emph{Initialization.} First, we set the characteristics of the simulated BRW: choose the dimension $d$ of the integer lattice, the functions defining the distribution of the jumps matrix $A$ and the infinitesimal generating functions conditioned by the relation $\beta=\mu$, as well as the execution time $T$ and a sufficiently large part of the space in which we will consider the process. At the initial moment of time, the state of the system is determined by the presence of a single particle at each point of a given space.

\emph{Step of algorithm.} On input we have pairs $\{x, t\}$ waiting to be processed. We select an arbitrary pair $\{x, t\}$ (due to the independence of the particles) and model the evolution of the corresponding particle: the exponential time $dt$ of staying it in $x$, then the jump or birth/death event. In the case of a jump, the transition state is simulated according to the matrix $A$, in the next state the current pair $\{x, t\}$ disappears and appears new pair $\{x', t+dt\}$. In the case of death, the current pair $\{x, t\}$ disappears, and in the case of birth (dividing into two offsprings) the current pair $\{x, t\}$ disappears and two new pairs $\{x, t+dt\}$ are added to the set.

\emph{Stop condition.} The algorithm terminates when all the values of $t$ of all pairs in the system exceed the specified time $T$, or when the set of states becomes empty (the process has degenerated). Finally, we obtain the coordinates of all the points in the system remaining at time $T$ and plot them on the graph.

At a point $x \in \mathbb{Z}^d$ the particle can perform a symmetric random walk or branching, which is a Galton-Watson process with infinitesimal generating function $f(u)=\beta-2\beta u+\beta u^2$. In this case, the particle's behavior is described as follows: the particle spends at a point $x$ a random time, exponentially distributed with the parameter $1+2\beta$, and then goes to the point $x'$ with probability $a(x,x')/(1+2\beta)$ or die with probability $\beta/(1+2\beta)$ or divided into two offsprings with probability $\beta/(1+2\beta)$.

To simulate ``randomness'' in implementing the proposed algorithm, we use the following random number generators:
\begin{align*}
& \text{particle rest time }Exp(1+2\beta),\\
& \text{evolution }Discr('walk',\ \frac{1}{1+2\beta}; \ 'splitting/death',\ \frac{2\beta}{1+2\beta}).
\end{align*}
In the case of a walk, the transition probabilities from $x$ to $x'$ are given by
\[
Discr\bigg(\frac{a(x,x')}{1+2\beta}\ \bigg| \ x'\in \mathbb{Z}^d, \ x'\ne x\bigg)
\]
and in the case of splitting/death
\[
Discr('splitting',\ \frac{1}{2};\ 'death',\ \frac{1}{2}).
\]

For  $\mathbb{Z}^1$ the obtained clusters are plotted in Fig.~\ref{g1} and for $\mathbb{Z}^2$ they are plotted in Fig.~\ref{g2}.

\begin{figure}[htbp!]
\includegraphics[width=0.49\textwidth]{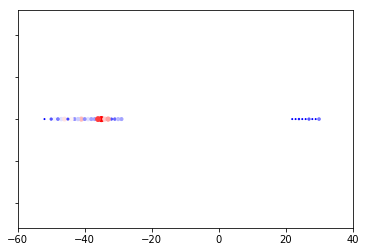}\qquad
\includegraphics[width=0.49\textwidth]{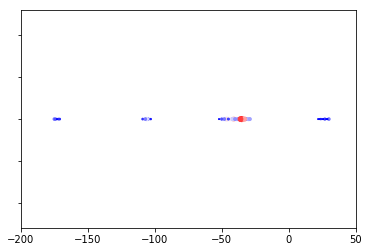}
\caption{Clusters on $\mathbb{Z}^1$}\label{g1}
\end{figure}

\begin{figure}[htbp!]
\includegraphics[width=0.49\textwidth]{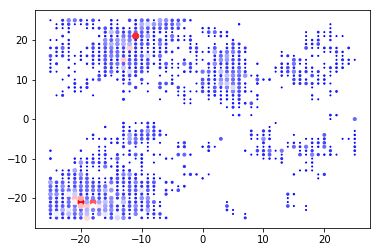}\qquad
\includegraphics[width=0.49\textwidth]{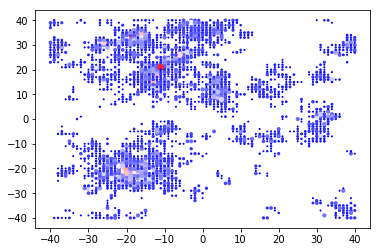}
\caption{Clusters on $\mathbb{Z}^2$}\label{g2}
\end{figure}

\textbf{Acknowledgements.} D.~Balashova and E.~Yarovaya were supported by the Russian Foundation for Basic Research (RFBR), project No. 17-01-00468. S.~Molchanov was supported by the Russian Science Foundation (RSF), project No. 17-11-01098.

\end{document}